\newtheorem{theorem}{Theorem}[section]
\newtheorem{lemma}[theorem]{Lemma}
\newtheorem{proposition}[theorem]{Proposition}
\newtheorem{remark}[theorem]{Remark}
\newtheorem{corollary}[theorem]{Corollary}
\newtheorem{conjecture}[theorem]{Conjecture}
\newcommand{\R}{\mathbb R}
\newcommand{\ZZ}{\mathbb{Z}}
\newcommand{\ZQ}{\mathbb{Q}}
\newcommand{\tM}
\title{Asymptotic estimate on the distance energy of lattices}
\author{Zhipeng Lu}
\address{Shenzhen MSU-BIT University, 1 International University Park Road, Dayun New Town, Longgang District, Shenzhen, Guangdong Province, P.R. China}
\email{zhipeng.lu@hotmail.com}
\author{Xianchang Meng}
\address{School of Mathematics, Shandong University, Jinan, Shandong 250100, China \& Johannes Kepler University Linz
Altenberger Stra\ss e 69, Linz 4040, Austria}
\email{xianchang.meng@gmail.com}
\keywords{Erd\H{o}s distinct distances problem, sum of squares, Dirichlet series}
\subjclass[2020]{52C10, 11P21, 20H10}
\date{}
\begin{document}
\begin{abstract}
	Since the well-known breakthrough of L. Guth and N. Katz on the Erd\H{o}s distinct distances problem in the plane, mainstream of interest is aroused by their method and the Elekes-Sharir framework. In short words, they study the second moment in the framework. One may wonder if higher moments would be more efficient. In this paper, we show that any higher moment fails the expectation. In addition, we show that the second moment gives optimal estimate in higher dimensions.
\end{abstract}
\maketitle

\section{Introduction}
The Erd\H{o}s conjecture on distinct distances in the Euclidean plane $\R^2$ says, $d(P):=|\{d(p,q)\mid p,q\in P\}|\gtrsim \frac{|P|}{\sqrt{\log|P|}}$ for any finite set $P\subset\R^2$. Here ``$\gtrsim$" means ``$\geq C\cdot$" for some absolute constant $C>0$ and $d(,)$ is the Euclidean distance. In Guth-Katz \cite{GK}, the nearly optimal bound $d(P)\gtrsim\frac{|P|}{\log|P|}$ was established. The authors used a group theoretic framework, called the Elekes-Sharir framework, to reduce the problem of enumerating distinct distances to that of estimating line-line incidences in $\R^3$. 

The essential object they studied is a type of energy, which they call \textit{distance quadruples}, i.e. $Q(P)=:\{(p_1,q_1,p_2,p_2)\in P^4\mid d(p_1,q_1)=d(p_2,q_2)\}$. We call $|Q(P)|$ the \textbf{distance energy} of $P$, denoted by $E_{2}(P)$. Moreover, we can define $E_{k}(P):=|\{(p_1,q_1,\dots,p_k,q_k)\in P^{2k}\mid d(p_1,q_1)=\cdots=d(p_k,q_k)\}|$, and call it the $k$-th distance energy of $P$.

In this paper, we consider higher distance energy $E_k(P)$ for $k\geq 3$ and investigate higher moments in the Elekes-Sharir framework with the expectation that it might be more efficient than just estimating $E_2(P)$ as in \cite{GK}. Due to technical reasons, we only consider the example of square grids, which yet shows that the expectation is in vain for the Euclidean plane. Moreover, we also study distance energies in $\R^n, n\geq3$ for the square grid example.
\\

{\bf Acknowledgements.} ZL is supported by Guangdong Basic and Applied Basic Research Foundation (No. 2021A1515110206). XM is supported by the National Natural Science Foundation of China (NSFC, Grant No. 12201346) and Shandong Provincial Foundation (Grant No. 2022HWYQ-046).

\section{Higher moments in the Elekes-Sharir framework}
We may define distance energy in any general metric space $(M,d)$. Let $P\subset M$ be a set of $N$ points and $d(P)$ the number of distinct distances between points of $P$. For each distance $d_i,i=1,\cdots,d(P)$, let $n_i$ be the number of pairs of points from $P$ at distance $d_i$. Clearly $\sum_{i=1}^{d(P)}n_i=2\binom{N}{2}=N(N-1)$. Then we use H\"{o}lder's inequality to get the following
\begin{lemma}\label{lem-Holder}For any positive integer $k\geq 2$,
\[d(P)\geq\dfrac{(N^2-N)^{\frac{k}{k-1}}}{(\sum_{i=1}^{d(P)}n_i^k)^{\frac{1}{k-1}}}.\]
\end{lemma}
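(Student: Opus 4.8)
The plan is to apply Hölder's inequality directly to the sum $\sum_{i=1}^{d(P)} n_i$, which equals $N^2 - N$. The key observation is that the number of distinct distances $d(P)$ is the number of terms in this sum, so I want to relate the $\ell^1$ norm of the sequence $(n_i)$ to its $\ell^k$ norm with the count of terms playing the role of the counting-measure factor.

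Let me write the proof. Let the sequence consist of the $d(P)$ values $n_1, \dots, n_{d(P)}$. Hölder's inequality with conjugate exponents $k$ and $\frac{k}{k-1}$ applied to the functions $n_i$ and the constant $1$ gives
\begin{equation}
\sum_{i=1}^{d(P)} n_i = \sum_{i=1}^{d(P)} n_i \cdot 1 \leq \left(\sum_{i=1}^{d(P)} n_i^k\right)^{1/k} \left(\sum_{i=1}^{d(P)} 1^{\frac{k}{k-1}}\right)^{\frac{k-1}{k}} = \left(\sum_{i=1}^{d(P)} n_i^k\right)^{1/k} d(P)^{\frac{k-1}{k}}.
\end{equation}
Since the left-hand side equals $N^2 - N$, I can raise both sides to the power $\frac{k}{k-1}$ and rearrange. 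Raising to the power $\frac{k}{k-1}$ yields $(N^2-N)^{\frac{k}{k-1}} \leq \left(\sum_i n_i^k\right)^{\frac{1}{k-1}} d(P)$, and dividing through by $\left(\sum_i n_i^k\right)^{\frac{1}{k-1}}$ gives exactly the claimed inequality.

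I do not anticipate a genuine obstacle here, since this is a textbook application of Hölder's inequality; the only care needed is bookkeeping with the conjugate exponents. One should check the edge case $k=2$ separately if desired, though it falls out of the same computation (there conjugate exponents are $2$ and $2$, giving Cauchy--Schwarz), and one should note that all $n_i > 0$ so no degenerate division occurs. The whole argument is a single displayed inequality followed by algebraic rearrangement.
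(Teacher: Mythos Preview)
Your proof is correct and is essentially identical to the paper's own argument: both apply H\"older's inequality with conjugate exponents $k$ and $\frac{k}{k-1}$ to $\sum_{i=1}^{d(P)} n_i = N^2 - N$ and then rearrange. There is nothing to add.
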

\begin{proof}
By H\"{o}lder's inequality for $\frac{k}{k-1}+\frac{1}{k-1}=1$,
\[N^2-N=\sum_{i=1}^{d(P)}n_i\leq \left(\sum_{i=1}^{d(P)}1^{\frac{k}{k-1}}\right)^{\frac{k-1}{k}}\left(\sum_{i=1}^{d(P)}n_i^k\right)^{1/k}={d(P)}^{\frac{k-1}{k}}\left(\sum_{i=1}^{d(P)}n_i^k\right)^{1/k}.\]
Rearrange we get the desired inequality.
\end{proof}
By our definition, $E_k(P)=\sum_{i=1}^{d(P)}n_i^k$. In order to prove Erd\H{o}s' conjecture in this setting, we need to show
\begin{equation}\label{eq-conjecture}
E_k(P)\lesssim N^{k+1}\left(\log N\right)^{\frac{k-1}{2}}, \forall P\subset\R^2 \text{ with } |P|=N,
\end{equation}
at least for some $k\geq2$.
Guth and Katz \cite{GK} already showed that this is not true for $k=2$. Actually, they proved $E_2(P)\lesssim N^3\log N$. They also calculated for the example where $P$ is a square grid with $N$ points, $E_2(P)\gtrsim N^3\log N$, see appendix of \cite{GK}. Thus, to verify or refute (\ref{eq-conjecture}), we need to estimate $E_k(P)$ at least for the square grid example.

\section{Rough estimate on higher distance energy of square grids}
In this section, we use number theoretic method, specifically, by calculating Dirichlet series on the number of expression as sum of squares derived from higher distance energies $E_k(P)$. Note that in the appendix of \cite{GK}, $E_2(P)$ was estimated by counting line-line incidences in $\R^3$.

Let $P=[\sqrt{N}]\times[\sqrt{N}]$ be the square grid of size $N$, where $[x]$ denotes the set of integers ranging from $1$ to $\lfloor x\rfloor$. Then a piece of energy in $E_k(P)$ is afforded by $a_1^2+b_2^2=\cdots=a_k^2+b_k^2$ for some $a_i,b_i\in[\sqrt{N}], i=1,\dots,k$, and some $p_i,q_i\in P$ such that $p_i-q_i=(\pm a_i,\pm b_i)$. Note that for $a_i,b_i\leq\frac{\sqrt{N}}{2}$, the number of such pairs $(p_i,q_i)$ is $\gtrsim \sqrt{N}\cdot\sqrt{N}=N$. Denoting $r(n):=|\{(a,b)\in\ZZ^2\mid a^2+b^2=n\}|$, we get the rough estimate written as
\begin{align}\label{eq-energy estimate}
N^{k}\sum_{n\leq N}r(n)^k\gtrsim E_k(P)\gtrsim N^k\sum_{n\leq\frac{N}{2}}r(n)^k.
\end{align}
On average, the number of square sum expressions have the following estimate
\begin{proposition}\label{prop-sq sum est.}
	For any positive integer $k$ and $x\in\R_+$, we have the following asymptotics 
	\[\sum_{n\leq x}r(n)^k\sim xP_{2^{k-1}-1}(\log x),\]
	where $P_{2^{k-1}-1}$ is a polynomial of degree $2^{k-1}-1$.
\end{proposition}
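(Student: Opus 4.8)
The plan is to study the Dirichlet series $D(s) = \sum_{n\ge 1} r(n)^k n^{-s}$, locate its rightmost singularity, and transfer that information to the partial sums by a Tauberian argument. First I would recall that $r(n)/4$ is multiplicative, so $g(n) := (r(n)/4)^k$ is multiplicative and $r(n)^k = 4^k g(n)$; hence for $\Re s > 1$ we have $D(s) = 4^k \prod_p h_p(s)$. I would then read off the local factors from the classical formula for $r$: at $p \equiv 3 \pmod 4$ one gets $\sum_{m\ge 0} g(p^m) p^{-ms} = (1-p^{-2s})^{-1}$, at $p = 2$ one gets $(1-2^{-s})^{-1}$, and at $p \equiv 1 \pmod 4$ one gets $F_k(p^{-s})$ where $F_k(t) = \sum_{m\ge 0}(m+1)^k t^m$.

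The key observation is that $F_k(t) = 1 + 2^k t + 3^k t^2 + \cdots$, so its linear coefficient is $2^k$, exactly twice $2^{k-1}$. I would therefore compare $D(s)$ with $[\zeta(s)L(s,\chi_4)]^{2^{k-1}}$, where $\chi_4$ is the nontrivial character modulo $4$: its local factor is $(1-p^{-s})^{-2^k}$ at $p \equiv 1 \pmod 4$ (where $\chi_4(p)=1$), is $(1-p^{-2s})^{-2^{k-1}}$ at $p \equiv 3 \pmod 4$, and is $(1-2^{-s})^{-2^{k-1}}$ at $p=2$. Writing $D(s) = [\zeta(s)L(s,\chi_4)]^{2^{k-1}} H(s)$, I would verify factor by factor that each local factor of $H$ is $1 + O(p^{-2s})$: at $p \equiv 1 \pmod 4$ the linear terms cancel since $F_k(t)(1-t)^{2^k} = 1 + O(t^2)$, while at the remaining primes the factor is a nonnegative power of $(1-p^{-2s})$. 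Consequently $H(s)$ converges absolutely and is holomorphic and nonvanishing for $\Re s > 1/2$.

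Since $\zeta$ has a simple pole at $s=1$ and $L(1,\chi_4) = \pi/4 \ne 0$, the factor $[\zeta(s)L(s,\chi_4)]^{2^{k-1}}$ contributes a pole of order exactly $2^{k-1}$ at $s=1$, and this is the only singularity of $D$ in $\Re s > 1/2$. Because the exponent $2^{k-1}$ is a positive integer, $D(s)$ is genuinely meromorphic there, so I would conclude by a standard Perron/contour-shift argument: using the classical zero-free regions and polynomial vertical growth bounds for $\zeta$ and $L(\cdot,\chi_4)$, shift the contour past $\Re s = 1$ and collect the residue at $s=1$. A pole of order $m = 2^{k-1}$ yields a main term $x\,P_{m-1}(\log x)$ with $\deg P_{m-1} = 2^{k-1}-1$ and positive leading coefficient $\tfrac{1}{(m-1)!}\lim_{s\to1}(s-1)^m D(s)$, together with an error term that is a power of $\log x$ smaller than the main term (indeed $O(x\exp(-c\sqrt{\log x}))$ with the classical zero-free region), which in particular gives the stated asymptotic relation. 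Equivalently one may invoke the Selberg--Delange method directly, as it is tailored to Dirichlet series of exactly this shape.

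I expect the main obstacle to be conceptual rather than computational: explaining why the pole order is $2^{k-1}$ and not, say, $k+1$. Although the local factor $F_k(t)$ has a pole of order $k+1$ at $t=1$, only the primes $p \equiv 1 \pmod 4$—a set of density $1/2$, captured by the single factor $L(s,\chi_4)$ alongside $\zeta(s)$—actually carry this growth, and the effective exponent is governed by the linear coefficient $g(p) = 2^k$ rather than by the order of the pole of $F_k$ at $t=1$. Making this precise is exactly the content of the factorization $D(s) = [\zeta(s)L(s,\chi_4)]^{2^{k-1}} H(s)$, after which the Tauberian step is routine.
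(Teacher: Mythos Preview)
Your proof is correct and follows essentially the same route as the paper: the factorization $D(s)=[\zeta(s)L(s,\chi_4)]^{2^{k-1}}H(s)$ with $H$ holomorphic for $\Re s>1/2$ is precisely Wilson's identity (the paper's Lemma~\ref{lem-Wilson}, with $\eta=L(\cdot,\chi_4)$), which you have re-derived from the Euler product rather than cited, and the extraction of the main term via Perron/contour shift is exactly the paper's Lemma~\ref{lem-Perron}. One small slip: at $p=2$ the local factor of $H$ is $(1-2^{-s})^{2^{k-1}-1}$, a power of $1-2^{-s}$ rather than of $1-2^{-2s}$, but since this is a single entire and nonvanishing (for $\Re s>0$) factor it does not affect your conclusion.
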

Note that more precisely for $k=2$, \begin{equation}\label{eq-k=2}
\sum_{n\leq x}r^2(n)\sim 4x\log x+O(x)
\end{equation}
(see Wilson \cite{Wilson}).
The lemma is a detour of consequence by the following two results.
\begin{lemma}[(7.20) of \cite{Wilson}]\label{lem-Wilson}
	For any positive integer $k$, there is the following expression of Dirichlet series:
	\[\sum_{n=1}\frac{r(n)^k}{n^s}=4^k(1-2^{-s})^{2^{k-1}-1}\left(\zeta(s)\eta(s)\right)^{2^{k-1}}\phi(s), \forall\Re(s)>1,\]
	where $\zeta(s)$ is the Riemann zeta function, $\eta(s)=1^{-s}-3^{-s}+5^{-s}-7^{-s}+\cdots$, and $\phi(s)=\prod_{p}\left(1+\sum_{\nu=2}^\infty a_\nu p^{-\nu s}\right)$ is absolutely convergent for $\Re(s)>\frac{1}{2}$.
\end{lemma}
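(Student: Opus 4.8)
The plan is to prove the identity prime by prime, exploiting that $r(n)/4$ is multiplicative. By the classical two-square formula (Jacobi), $r(n)=4\sum_{d\mid n}\chi(d)$, where $\chi$ is the nontrivial Dirichlet character modulo $4$ (so $\chi(d)=0$ for even $d$ and $\chi(d)=(-1)^{(d-1)/2}$ for odd $d$); in particular $\eta(s)=\sum_{n\ge1}\chi(n)n^{-s}=L(s,\chi)$. Thus $f(n):=r(n)/4=(\mathbf{1}\ast\chi)(n)$ is multiplicative, hence so is $f^k$, and for $\Re(s)>1$
\[\sum_{n\ge1}\frac{r(n)^k}{n^s}=4^k\prod_p L_p(s),\qquad L_p(s):=\sum_{\nu\ge0}\frac{f(p^\nu)^k}{p^{\nu s}}.\]
On the other side, $\zeta(s)\eta(s)=\sum_{n\ge1}f(n)n^{-s}$ has Euler factors $\big((1-p^{-s})(1-\chi(p)p^{-s})\big)^{-1}$, so everything reduces to comparing $L_p(s)$ with the $2^{k-1}$-th power of these factors and reading off the quotient, which I will take as the very definition of the local factor $\phi_p(s)$ of $\phi(s)$.

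First I would record the local data. Writing $x=p^{-s}$ and using $f(2^\nu)=1$, $f(p^\nu)=\nu+1$ for $p\equiv1\pmod4$, and $f(p^\nu)=\tfrac{1+(-1)^\nu}{2}$ for $p\equiv3\pmod4$, one gets $L_2=(1-x)^{-1}$, $L_p=\sum_{\nu\ge0}(\nu+1)^k x^\nu$ for $p\equiv1$, and $L_p=(1-x^2)^{-1}$ for $p\equiv3$. For the class $p\equiv1$ I would invoke the generating identity $\sum_{\nu\ge0}(\nu+1)^k x^\nu=A_k(x)(1-x)^{-(k+1)}$, where $A_k$ is the Eulerian polynomial, of degree $k-1$, with $A_k(0)=1$ and coefficient of $x$ equal to $2^k-k-1$. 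The corresponding factors of $\zeta\eta$ are $(1-x)^{-1}$ at $p=2$, $(1-x)^{-2}$ at $p\equiv1$, and $(1-x^2)^{-1}$ at $p\equiv3$. Raising to the power $2^{k-1}$ and extracting the single explicit factor $(1-2^{-s})^{2^{k-1}-1}$, a direct cancellation gives $\phi_2=1$, while for odd primes
\[\phi_p(s)=A_k(x)\,(1-x)^{2^k-k-1}\ \ (p\equiv1),\qquad \phi_p(s)=(1-x^2)^{2^{k-1}-1}\ \ (p\equiv3).\]
In each case $\phi_p$ is a polynomial in $p^{-s}$ with constant term $1$, so the asserted factorization holds by construction; it remains only to check the convergence claim.

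The decisive point is that $\phi_p$ has no term in $p^{-s}$, i.e.\ its linear coefficient in $x$ vanishes, giving $\phi_p=1+O_k(p^{-2\Re(s)})$ and hence absolute convergence of $\prod_p\phi_p$ on $\Re(s)>1/2$. For $p\equiv3$ this is automatic, since $\phi_p$ is a polynomial in $x^2$. For $p\equiv1$ the coefficient of $x$ in $A_k(x)(1-x)^{2^k-k-1}$ equals $(2^k-k-1)+\big(-(2^k-k-1)\big)=0$, using $A_k(0)=1$ and $[x]A_k(x)=2^k-k-1$. I would stress that this cancellation is exactly what pins down the exponent $2^{k-1}$: had one pulled out $(\zeta\eta)^m$, the linear coefficient would read $2^k-2m$, forcing $m=2^{k-1}$. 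This Eulerian-polynomial input --- equivalently, the finite-difference evaluation $[x]\big((1-x)^{k+1}\sum_\nu(\nu+1)^kx^\nu\big)=2^k-k-1$ --- is the one genuinely nontrivial step and thus the main obstacle; bounding the remaining (finitely many, residue-class-dependent, uniformly bounded) coefficients $a_\nu$ is then routine and yields the convergence of $\phi(s)$ for $\Re(s)>\tfrac12$.
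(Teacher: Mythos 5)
The paper offers no proof of this lemma at all --- it is imported verbatim from Wilson \cite{Wilson}, equation (7.20) --- so there is no in-paper argument to compare against; judged on its own, your derivation is correct and complete, and it is essentially the classical argument of the cited source: multiplicativity of $r(n)/4=(\mathbf{1}\ast\chi)(n)$, the explicit local factors $(1-x)^{-1}$, $\sum_{\nu\ge0}(\nu+1)^k x^\nu$, $(1-x^2)^{-1}$ in the three residue classes, and the Eulerian-polynomial identity at split primes. Your key cancellation $[x]\bigl(A_k(x)(1-x)^{2^k-k-1}\bigr)=(2^k-k-1)-(2^k-k-1)=0$ is the right mechanism --- it simultaneously pins down the exponent $2^{k-1}$ (via $[x]\phi_p=2^k-2m$) and gives $\phi_p=1+O_k(p^{-2\sigma})$ with uniformly bounded, residue-class-dependent coefficients, which yields absolute convergence of $\prod_p\phi_p$ for $\Re(s)>\frac12$ exactly as claimed; as a sanity check, your formulas correctly degenerate to $4\zeta(s)\eta(s)$ for $k=1$ and to $16\zeta(s)^2\eta(s)^2/\bigl((1+2^{-s})\zeta(2s)\bigr)$ for $k=2$.
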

To get the average of $r^k(n)$, we rely on the follow form of Perron's formula:
\begin{lemma}[Theorem 1 of Chapter V in Karatsuba \cite{Karatsuba}]\label{lem-Perron}
	Assume that the Dirichlet series $f(s)=\sum\limits_{n=1}^\infty\frac{a_n}{n^s}$ converges absolutely for $\Re(s)>1$, $|a_n|\leq A(n)$ for some monotonically increasing function $A(x)>0$, and 
	\[\sum_{n=1}^\infty\frac{|a_n|}{n^\sigma}=O((\sigma-1)^{-\alpha}), \alpha>0,\]
	as $\sigma\rightarrow 1_+$. Then for any $b_0\geq b>1$, $T\geq 1$, and $x=N+\frac{1}{2}$, we have 
	\[\sum_{n\leq x}a_n=\frac{1}{2\pi i}\int_{b-iT}^{b+iT}f(s)\frac{x^s}{s}ds+O\left(\frac{x^b}{T(b-1)^{\alpha}}\right)+O\left(\frac{xA(2x)\log x}{T}\right).\]
\end{lemma}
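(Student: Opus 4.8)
\emph{The statement is the truncated (effective) form of Perron's formula, and the plan is to derive it from the classical discontinuous-integral estimate, interchange summation and integration, and then bound the resulting error using the two hypotheses on the coefficients.}

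The backbone is the standard estimate for the Perron kernel: for real $y>0$, $c>0$ and $T\ge 1$,
\[
\left|\frac{1}{2\pi i}\int_{c-iT}^{c+iT}\frac{y^s}{s}\,ds-\delta(y)\right|\ \ll\ y^{c}\min\!\Big(1,\frac{1}{T|\log y|}\Big),\qquad y\ne 1,
\]
where $\delta(y)=1$ for $y>1$ and $\delta(y)=0$ for $0<y<1$. I would prove this by shifting the contour: for $y>1$ one pushes $\Re s\to-\infty$, picking up the residue $1$ at $s=0$, while for $0<y<1$ one pushes $\Re s\to+\infty$ with no pole, in each case estimating the two horizontal segments at height $\pm T$ by $\frac{1}{T}\int y^{\sigma}\,d\sigma$. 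This is the only genuinely analytic input.

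Next, on the vertical segment $\Re s=b$ with $b>1$ the series $\sum_n a_n n^{-s}$ converges absolutely, dominated by $\sum_n|a_n|n^{-b}<\infty$, so I can interchange summation with the integral over the bounded contour $[b-iT,b+iT]$ to get
\[
\frac{1}{2\pi i}\int_{b-iT}^{b+iT}f(s)\frac{x^{s}}{s}\,ds=\sum_{n=1}^{\infty}a_n\cdot\frac{1}{2\pi i}\int_{b-iT}^{b+iT}\frac{(x/n)^{s}}{s}\,ds.
\]
Applying the kernel estimate with $y=x/n$ and $c=b$ produces the main term $\sum_{n<x}a_n$, which equals $\sum_{n\le x}a_n$ precisely because $x=N+\tfrac12$ is a half-integer, so no index $n$ hits the degenerate value $y=1$; this is exactly why the half-integer shift is imposed. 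The remainder is bounded by
\[
R\ \ll\ \sum_{n=1}^{\infty}|a_n|\Big(\frac{x}{n}\Big)^{b}\min\!\Big(1,\frac{1}{T|\log(x/n)|}\Big).
\]
I then split $R$ by whether $n$ is far from or near $x$. For $n\le x/2$ or $n\ge 2x$ one has $|\log(x/n)|\gg 1$, so the minimum is $\ll 1/T$ and this range contributes $\ll\frac{x^{b}}{T}\sum_{n}|a_n|n^{-b}\ll\frac{x^{b}}{T(b-1)^{\alpha}}$ by the hypothesis $\sum_n|a_n|n^{-\sigma}=O((\sigma-1)^{-\alpha})$ taken at $\sigma=b$; this is the first error term. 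In the window $x/2<n<2x$ I bound $(x/n)^{b}\ll 1$ (the implied constant depending on $b_0$) and $|a_n|\le A(n)\le A(2x)$, and use $|\log(x/n)|\asymp|x-n|/x$. Since $|x-n|\ge\tfrac12$, summing over integers in the window gives
\[
\sum_{x/2<n<2x}\min\!\Big(1,\frac{x}{T|x-n|}\Big)\ \ll\ \frac{x}{T}+\frac{x}{T}\sum_{1\le m\le x}\frac{1}{m}\ \ll\ \frac{x\log x}{T},
\]
so this range contributes $\ll A(2x)\,\frac{x\log x}{T}$, the second error term. Collecting the main term with the two estimates yields the asserted identity.

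The only delicate point, and the one I expect to be the main obstacle, is the near-diagonal range $n\approx x$, where the discontinuous-integral bound degenerates as $y\to 1$ and the factor $1/|\log y|$ cannot be used uniformly. There one must exploit the half-integer spacing $|x-n|\ge\tfrac12$ together with the harmonic sum $\sum_{m\le x}1/m\ll\log x$ to extract exactly the factor $\log x$ appearing in the second error term. Everything else — the contour shift for the kernel, the interchange of sum and integral on the compact contour, and the far-range bound — is routine once the two coefficient hypotheses are invoked.
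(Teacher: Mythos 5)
The paper offers no proof of this lemma at all---it is imported verbatim as Theorem 1 of Chapter V of Karatsuba \cite{Karatsuba}---and your argument is exactly the standard proof given in that source: the truncated discontinuous-integral (kernel) estimate, termwise integration justified by absolute convergence on the compact contour $[b-iT,b+iT]$, and the split into a far range controlled by the hypothesis $\sum_n |a_n|n^{-b}=O((b-1)^{-\alpha})$ (uniformly for $1<b\le b_0$, by monotonicity of the sum in $b$) and a near-diagonal window controlled by $|a_n|\le A(n)\le A(2x)$, the half-integer spacing $|x-n|\ge\tfrac12$, and the harmonic sum. Your proof is correct; the one step you gloss over is the $\ll y^c$ branch of the kernel bound, which requires a circular-arc contour through $c\pm iT$ rather than the horizontal shifts you describe---but in this application it is dispensable, since the spacing $|\log(x/n)|\gg |x-n|/x\ge 1/(2x)$ lets the $1/(T|\log y|)$ branch carry the whole near-diagonal estimate.
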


\begin{proof}[Proof of Proposition \ref{prop-sq sum est.}]
	First, $r(n)$ is indeed always of order $o(n^\epsilon)$ for any $\epsilon>0$ (see for instance, Theorem 338 of Hardy and Wright \cite{HW}). Thus the condition of Lemma \ref{lem-Perron} is easily satisfied. Also note that $\eta(s)$ is holomorphic and poles are on only $\zeta(s)$. Then Wilson's calculation of the Dirichlet series of $r(n)^k$ as in Lemma \ref{lem-Wilson} shows that, by estimating the residue integral of contour, the order of $\sum_{n\leq x}r(n)^k$ should be $x(\log x)^{2^{k-1}-1}$, whereas $T$ may tend to be larger than any log power due to irrelevance of choices of contours. 
	
\end{proof}
Proposition \ref{prop-sq sum est.} together with (\ref{eq-energy estimate}) immediately implies the following
\begin{theorem}
	For $k\geq2$ and large positive integer $N$ ($\gg k$), we have the following estimate on the $k$-th distance energy:
	\[N^{k+1}(\log N)^{2^{k-1}-1}\gtrsim E_k([\sqrt{N}]\times[\sqrt{N}])\gtrsim_k N^{k+1}(\log N)^{2^{k-1}-1}.\]
\end{theorem}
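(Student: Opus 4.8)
The plan is to treat this as a direct corollary of Proposition \ref{prop-sq sum est.} and the two-sided energy estimate (\ref{eq-energy estimate}), so that the whole argument reduces to substituting the asymptotic for $\sum_{n\leq x}r(n)^k$ into both ends of the sandwich and tracking the top power of $\log N$. Writing $P_{2^{k-1}-1}(t)=c_k t^{2^{k-1}-1}+(\text{lower order})$ for the polynomial supplied by Proposition \ref{prop-sq sum est.}, the one point I would pin down before anything else is that the leading coefficient $c_k$ is strictly positive. This follows most cleanly from the fact that $\sum_{n\leq x}r(n)^k$ is a nondecreasing sum of nonnegative terms, so its main term cannot carry a negative leading coefficient; equivalently, $c_k$ is the top contribution of the order-$2^{k-1}$ pole of $(\zeta(s)\eta(s))^{2^{k-1}}$ at $s=1$ in Lemma \ref{lem-Wilson}, and since $\zeta$ has a simple pole of positive residue while $\eta(1)=\pi/4$, $\phi(1)$, and $(1-2^{-1})^{2^{k-1}-1}$ are all positive, one gets $c_k>0$ explicitly. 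Hence Proposition \ref{prop-sq sum est.} upgrades to the clean two-sided relation $\sum_{n\leq x}r(n)^k\asymp_k x(\log x)^{2^{k-1}-1}$ for all large $x$.

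For the upper bound I would invoke the right-hand inequality of (\ref{eq-energy estimate}), namely $E_k(P)\lesssim N^{k}\sum_{n\leq N}r(n)^k$, and feed in $\sum_{n\leq N}r(n)^k\sim c_k N(\log N)^{2^{k-1}-1}$ to obtain $E_k(P)\lesssim N^{k+1}(\log N)^{2^{k-1}-1}$, which is exactly the asserted upper estimate.

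For the lower bound I would begin from the left-hand inequality of (\ref{eq-energy estimate}), $E_k(P)\gtrsim N^{k}\sum_{n\leq N/2}r(n)^k$. The only mild wrinkle is that the truncation sits at $N/2$ rather than $N$, so I would apply Proposition \ref{prop-sq sum est.} at $x=N/2$, giving $\sum_{n\leq N/2}r(n)^k\sim c_k\tfrac{N}{2}\big(\log\tfrac{N}{2}\big)^{2^{k-1}-1}$, then absorb the factor $\tfrac12$ into the implied constant and use $\log(N/2)=\log N-\log 2\sim\log N$ as $N\to\infty$; this yields $E_k(P)\gtrsim_k N^{k+1}(\log N)^{2^{k-1}-1}$, matching the lower estimate. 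There is no genuine obstacle at this stage—the substantive analytic work has already been done in Proposition \ref{prop-sq sum est.}—and the only real care needed is the positivity of $c_k$, without which the polynomial asymptotic would only furnish an upper bound rather than the matching two-sided one; the $k$-dependence of the lower implied constant is inherited from $c_k$ together with the $k$-fold count of pairs $(p_i,q_i)$ built into (\ref{eq-energy estimate}).
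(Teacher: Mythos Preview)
Your proposal is correct and follows exactly the paper's route: the theorem is stated there as an immediate consequence of Proposition~\ref{prop-sq sum est.} combined with the sandwich~(\ref{eq-energy estimate}), and you have simply unpacked that implication with care (in particular your justification that the leading coefficient $c_k>0$ is a detail the paper leaves implicit). The only cosmetic slip is that you have the labels ``left-hand'' and ``right-hand'' inequality of~(\ref{eq-energy estimate}) swapped, but since you write out the correct inequalities explicitly this does not affect the argument.
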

The result turns against the expectation of (\ref{eq-conjecture}) by a big log factor. Moreover, if $N^{k+1}(\log N)^{2^{k-1}}$ is the right order for $E_k(P)$ in general, then by Lemma \ref{lem-Holder}, in the Elekes-Sharir framework we may get the most efficient bound $d(P)\gtrsim\frac{N}{\log N}$ only when we study the second moment. 
\section{Distance energy of square grids in higher dimensions}
In addition, we notice that the distance energy of square lattices in higher dimensions is optimal. Consider $P=[\sqrt[m]{N}]^m$ the square grid of size $N$ in $\R^m, m\geq 3$, and let $r_m(n)=|\{(x_1,\dots,x_m)\in\ZZ^m\mid x_1^2+\cdots+x_m^2=n\}|$. Then similarly we have 
\[N^2\sum\limits_{n\leq \frac{N^{2/3}}{2}}r_3^2(n)\leq E_2(P)\leq N^2\sum\limits_{n\leq N^{2/3}}r_3^2(n).\]
To this end, we introduce a more general result as follows
\begin{lemma}[Theorem 6.1 of M\"{u}ller \cite{Muller}]\label{lem-Muller}
	Let $q(\mathbf{x})=\frac{1}{2}\mathbf{x}^TQ\mathbf{x}$ be a primitive positive definite integral quadratic form in $m\geq 3$ variables and $r_Q(n)=|\{\mathbf{x}\in\ZZ^m\mid q(\mathbf{x})=n\}|$. Then
	\[\sum_{n\leq x}r_Q^2(n)=Bx^{m-1}+O\left(x^{(m-1)\frac{4m-5}{4m-3}}\right),\]
	for some constant $B>0$ depending on $Q$. For $m=2$, 
	\[\sum_{n\leq x}r_Q^2(n)=A_Qx\log x+O(x),\]
	where \[A_Q=12\frac{A(q)}{q}\prod_{p\mid q}\left(1+\frac{1}{p}\right)^{-1},\ q=\det(Q).\]
	Here $A(q)$ denotes the multiplicative function defined by $A(p^e)=2+(1-\frac{1}{p})(e-1)$ for odd prime $p$, and 
	\[A(2^e)=\begin{cases}
	1, \text{ if }e\leq 1,\\
	2  \text{ if } e=2,\\
	e-1, \text{ if }e\geq 3.
	\end{cases}\]
\end{lemma}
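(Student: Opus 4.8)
The plan is to study the generating function and exploit that the theta series of $Q$ is a modular form. Setting $\theta_Q(z)=\sum_{\mathbf{x}\in\ZZ^m}e^{2\pi i q(\mathbf{x})z}=\sum_{n\geq0}r_Q(n)e^{2\pi i nz}$, this is a weight-$m/2$ modular form for a suitable congruence group, and I would first decompose it as $\theta_Q=E_Q+f_Q$ into an Eisenstein and a cuspidal part, giving $r_Q(n)=\rho(n)+\eta(n)$. Here $\rho(n)$ is the Eisenstein coefficient, essentially $n^{m/2-1}$ times a bounded multiplicative singular series, and $\eta(n)$ is a cusp-form coefficient obeying the Hecke bound $\eta(n)\ll n^{m/4-1/2+\epsilon}$. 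This decomposition is what dictates the shape of the answer: it predicts a main term of size $x^{m-1}$ for $m\geq3$ and, because the relevant Dirichlet series acquires a \emph{double} pole at $m=2$, a main term of size $x\log x$ in the planar case.

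To extract the main term I would form $\sum_n\rho(n)^2n^{-s}$; by multiplicativity of the singular series it factors as a product of shifted Riemann zeta values times an Euler product that converges in a half-plane to the right of the rightmost pole $s=m-1$. Applying Perron's formula (Lemma \ref{lem-Perron}) and shifting the contour past this pole yields $Bx^{m-1}$ when $m\geq3$ and $A_Qx\log x$ when $m=2$; computing the residue prime-by-prime, keeping careful track of the local densities at each $p\mid q=\det(Q)$ and at $p=2$, is precisely what produces the explicit constant $A_Q$ and the multiplicative function $A(q)$ in the statement.

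There remain the two error contributions coming from $r_Q(n)^2=\rho(n)^2+2\rho(n)\eta(n)+\eta(n)^2$. The cuspidal diagonal is controlled by Rankin-Selberg, $\sum_{n\leq x}\eta(n)^2\ll x^{m/2}$, which is comfortably smaller than $x^{m-1}$. The cross term $\sum_{n\leq x}\rho(n)\eta(n)$ is the genuinely delicate piece: a bare Cauchy-Schwarz estimate $\ll\big(\sum\rho(n)^2\big)^{1/2}\big(\sum\eta(n)^2\big)^{1/2}\asymp x^{(3m-2)/4}$ already overshoots the claimed error, so one must instead exhibit cancellation in the cusp-form coefficients summed against the smoothly varying weight $\rho(n)$. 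I expect this is most cleanly done by returning to the full counting problem $\sum_{n\leq x}r_Q(n)^2=\#\{(\mathbf{x},\mathbf{y})\in\ZZ^{2m}:q(\mathbf{x})=q(\mathbf{y})\leq x\}$ and running the circle method: a major/minor-arc dissection of the integral $\int_0^1\big|\sum_{q(\mathbf{x})\leq x}e(\alpha q(\mathbf{x}))\big|^2\,d\alpha$, with Weil-type bounds for the quadratic Gauss sums over the minor arcs.

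The hard part is therefore the sharp error term rather than the main term: the exponent $\frac{4m-5}{4m-3}$ is not forced by any single crude estimate but emerges only after balancing the major-arc approximation against the minor-arc exponential-sum bounds and optimizing the arc lengths (equivalently, the Perron cut-off $T$). Carrying this out so that the saving is uniform in $x$ and so that $B$ and the implied constant depend only on $Q$---which requires the Gauss-sum bounds to be uniform over the residue data of $Q$---is the technical heart of the proof.
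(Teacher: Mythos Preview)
The paper does not prove this lemma at all: it is quoted as Theorem~6.1 of M\"uller~\cite{Muller} and used as a black box, so there is no in-paper argument against which to compare yours.

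As a standalone sketch your outline is sensible in its broad architecture---theta series, Eisenstein/cuspidal splitting $r_Q=\rho+\eta$, Perron for the main term, Rankin--Selberg for $\sum\eta(n)^2$---and this is indeed the framework in which M\"uller works (his paper treats mean squares of Dirichlet series attached to automorphic forms). Two points deserve caution. First, for $m=3$ the theta series has half-integral weight $3/2$, so the ``Hecke bound'' $\eta(n)\ll n^{m/4-1/2+\epsilon}$ is not the classical one and requires the Shimura lift or Iwaniec-type input; you should flag that this case is genuinely different. Second, your proposed endgame---switching to a circle-method treatment of $\#\{(\mathbf{x},\mathbf{y}):q(\mathbf{x})=q(\mathbf{y})\le x\}$ with major/minor arcs---is \emph{not} how M\"uller obtains the exponent $(m-1)\tfrac{4m-5}{4m-3}$. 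His route is to analyse the Dirichlet series $\sum r_Q(n)n^{-s}$ directly via its functional equation (it is essentially an Epstein zeta function), prove a mean-square estimate for it on the critical line, and feed that into a Perron-type argument; the specific exponent falls out of the convexity/mean-value input, not from balancing arc lengths. Your circle-method detour would likely recover a weaker error term unless you imported comparably strong exponential-sum technology, at which point you are essentially redoing the Dirichlet-series argument in disguise.
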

This immediately implies
\begin{corollary}\label{cor-3 d}
	For $P=[\sqrt[m]{N}]^m$ the square grid of size $N$ in $\R^m, m\geq 3$, we have
	\[N^{2+\frac{2m-2}{m}}\lesssim_m E_2(P)\lesssim_m N^{2+\frac{2m-2}{m}}.\]
\end{corollary}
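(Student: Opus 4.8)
The plan is to generalize the displayed $m=3$ sandwich to arbitrary $m\ge 3$ and then insert the two endpoints into M\"uller's asymptotic (Lemma \ref{lem-Muller}). First I would express $E_2(P)$ as a weighted count over squared distances: writing $s(n)$ for the number of ordered pairs $(p,q)\in P^2$ with $|p-q|^2=n$, we have $E_2(P)=\sum_n s(n)^2$. Each vector $v=(x_1,\dots,x_m)\in\ZZ^m$ with $|v|^2=n$ is a possible displacement $p-q$, and the number of pairs realizing a fixed $v$ equals the number of $q\in P$ with $q+v\in P$, which is at most $|P|\le N$; since there are exactly $r_m(n)$ such $v$, this gives $s(n)\le N\,r_m(n)$ for all $n$. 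Moreover every coordinate difference has absolute value $<N^{1/m}$, so $s(n)=0$ once $n>mN^{2/m}$. For the matching lower bound, if $n\le N^{2/m}/4$ then each representation satisfies $|x_j|\le\sqrt n\le N^{1/m}/2$, so in every coordinate at least $\gtrsim N^{1/m}$ translates stay inside $P$, whence $s(n)\gtrsim N\,r_m(n)$. Squaring and summing yields
\[
N^2\sum_{n\le N^{2/m}/4}r_m^2(n)\ \lesssim_m\ E_2(P)\ \lesssim_m\ N^2\sum_{n\le mN^{2/m}}r_m^2(n),
\]
which is exactly the $m=3$ display up to the harmless truncation constants $1/4$ and $m$.

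Next I would evaluate both endpoints with Lemma \ref{lem-Muller}, applied to the form $q(\mathbf{x})=x_1^2+\cdots+x_m^2$ (that is $Q=2I$), which is integral, positive definite, and primitive since the gcd of its coefficients is $1$; note that then $r_Q=r_m$. For $m\ge 3$ the lemma gives $\sum_{n\le x}r_m^2(n)=Bx^{m-1}+O\big(x^{(m-1)\frac{4m-5}{4m-3}}\big)$ for some $B=B(m)>0$. Because $\frac{4m-5}{4m-3}<1$, the error exponent is strictly less than $m-1$, so the error is $o(x^{m-1})$ and $\sum_{n\le x}r_m^2(n)\sim Bx^{m-1}$ as $x\to\infty$. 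Substituting $x=N^{2/m}/4$ and $x=mN^{2/m}$, both of which tend to infinity with $N$ for fixed $m$, each side of the sandwich is simultaneously $\gtrsim_m N^{2(m-1)/m}$ and $\lesssim_m N^{2(m-1)/m}$, the implied constants absorbing the fixed factors $4^{-(m-1)}$ and $m^{m-1}$.

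Combining the two displays gives $N^{2+\frac{2m-2}{m}}\lesssim_m E_2(P)\lesssim_m N^{2+\frac{2m-2}{m}}$, which is the claim. The only genuinely nontrivial ingredient is M\"uller's power-saving second-moment asymptotic; the rest, namely the two-sided comparison $s(n)\le N\,r_m(n)$ together with its lower counterpart on the admissible range, and the observation that the error exponent falls below $m-1$, is routine. The one point to watch is that the lower and upper truncation ranges differ only by a constant factor depending on $m$, so that after applying the $x^{m-1}$ asymptotic both endpoints match up to $m$-dependent constants; this is precisely what makes the sandwich collapse to a single order.
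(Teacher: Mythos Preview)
Your argument is correct and follows the same route as the paper: the sandwich $N^2\sum_{n\le cN^{2/m}}r_m^2(n)\lesssim_m E_2(P)\lesssim_m N^2\sum_{n\le CN^{2/m}}r_m^2(n)$ combined with M\"uller's asymptotic $\sum_{n\le x}r_m^2(n)\sim Bx^{m-1}$. The paper simply says the corollary ``immediately'' follows from Lemma~\ref{lem-Muller} together with the displayed $m=3$ sandwich, so you have merely spelled out the routine details (the translation count for $s(n)$, the primitivity of $Q=2I$, and the fact that the error exponent lies below $m-1$) that the paper leaves implicit.
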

Note that by Legendre's three-square theorem, $n=x^2+y^2+z^2\leq N^{\frac{2}{3}}$ for $n\neq 4^a(8m+7)$, which amount to $cN^{\frac{2}{3}}$ numbers for some $c>0$, i.e. $d(P)=c'N^{\frac{2}{3}}, c'>0$, as Erd\H{o}s noted for the distinct distances conjecture in $\R^3$. For $m\geq 4$, by Lagrange's four-square theorem, each positive integer can be expressed as a sum of $m$ squares, i.e. $d(P)=c{N}^{\frac{2}{m}}$. Thus for any $m\geq3$, we can conclude that
\begin{corollary}
	For any $m\geq3$, the estimate by distance energy of distinct distances in $\R^m$ is optimal, i.e.
	\[d(P)\lesssim\frac{|P|^4}{E_2(P)},\]
	for certain examples like $P=[\sqrt[m]{N}]^m$ the square grid of size $N$.
\end{corollary}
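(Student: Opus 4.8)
The plan is to combine two orders of magnitude: the elementary upper bound on the number of distinct distances of the grid, and the upper bound on its second distance energy furnished by Corollary~\ref{cor-3 d}. Recall that Lemma~\ref{lem-Holder} with $k=2$ already gives $d(P)\geq (N^2-N)^2/E_2(P)\gtrsim |P|^4/E_2(P)$ for \emph{every} finite $P$; what the corollary asserts is the reverse inequality for the grid $P=[\sqrt[m]{N}]^m$, so that together the two bounds pin down $d(P)\asymp_m |P|^4/E_2(P)$, i.e.\ the energy estimate captures the true order of $d(P)$.

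First I would bound $d(P)$ from above. Every distance realized in $P$ has the form $\sqrt{x_1^2+\cdots+x_m^2}$ with $x_i\in\ZZ$ and $|x_i|<\sqrt[m]{N}$, so the underlying integer $x_1^2+\cdots+x_m^2$ lies in the range $[1,\,m\,N^{2/m}]$. The number of distinct values it can take, and hence $d(P)$ itself, is therefore at most $m\,N^{2/m}$, giving $d(P)\lesssim_m N^{2/m}$. As noted in the remark preceding the corollary, the three- and four-square theorems show this bound is sharp, $d(P)\asymp_m N^{2/m}$, but only the upper bound is needed for the stated inequality.

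Next I would invoke the upper bound of Corollary~\ref{cor-3 d}, namely $E_2(P)\lesssim_m N^{2+\frac{2m-2}{m}}$. Since $2+\frac{2m-2}{m}=4-\frac{2}{m}$ and $|P|=N$, a direct computation of exponents yields
\[\frac{|P|^4}{E_2(P)}=\frac{N^4}{E_2(P)}\gtrsim_m \frac{N^4}{N^{4-\frac{2}{m}}}=N^{\frac{2}{m}}.\]
Combining this with the bound of the previous paragraph gives $d(P)\lesssim_m N^{2/m}\lesssim_m |P|^4/E_2(P)$, which is precisely the claim.

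There is no serious obstacle here: the statement is an arithmetic consequence of Corollary~\ref{cor-3 d} together with the trivial count of distinct distances. The only point demanding care is matching the exponents, i.e.\ checking $2+\frac{2m-2}{m}=4-\frac{2}{m}$ so that the energy exponent and the distance exponent are genuinely complementary; and, if one wants the full equivalence $d(P)\asymp_m |P|^4/E_2(P)$ rather than merely the one-sided bound written in the corollary, invoking Legendre's and Lagrange's theorems to certify the matching lower bound $d(P)\gtrsim_m N^{2/m}$.
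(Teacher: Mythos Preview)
Your argument is correct and follows essentially the same route as the paper: combine the order of $d(P)$ for the grid with the order of $E_2(P)$ from Corollary~\ref{cor-3 d} and check that the exponents match. The only cosmetic difference is that the paper invokes Legendre's and Lagrange's theorems to pin down $d(P)\asymp_m N^{2/m}$ exactly, whereas you observe (correctly) that for the one-sided inequality actually stated only the trivial bound $d(P)\le mN^{2/m}$ is needed, reserving the arithmetic input for the full two-sided equivalence.
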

This seems to indicate the following
\begin{conjecture}
   $E_{2}(P)\lesssim |P|^{2+\frac{2m-2}{m}}$ for any finite set $P\subset\R^m$. 
\end{conjecture}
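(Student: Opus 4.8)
The plan is to prove the conjecture in the equivalent form $E_2(P)\lesssim N^{4-2/m}$ (writing $N=|P|$ and noting $2+\frac{2m-2}{m}=4-\frac{2}{m}$) by establishing that the square grid of Corollary \ref{cor-3 d} is extremal. Writing $f(\delta)$ for the number of ordered pairs $(p,q)\in P^2$ with $d(p,q)=\delta$, one has $E_2(P)=\sum_\delta f(\delta)^2$ subject to $\sum_\delta f(\delta)=N^2-N$, so the whole problem concerns the distribution of distance multiplicities. The natural approach is a generalized Elekes--Sharir reduction: an equal-distance quadruple $(p_1,q_1,p_2,q_2)$ is carried by the family of isometries of $\R^m$ taking the segment $p_1q_1$ onto $p_2q_2$ (this family is positive-dimensional once $m\ge3$, being a coset of a stabilizer of dimension $\binom{m-1}{2}$), so counting quadruples becomes an incidence count inside the Euclidean group $SE(m)$. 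A more elementary alternative is to decompose dyadically by multiplicity and bound, for each threshold $\tau$, the number of distances with $f(\delta)\sim\tau$.

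Concretely, I would first set up
\[
E_2(P)\lesssim \sum_{\tau\ \mathrm{dyadic}}\tau^2\,\bigl|\{\delta: f(\delta)\sim\tau\}\bigr|,
\]
and then supply the two ingredients needed to control it: an upper bound on the maximal multiplicity $\max_\delta f(\delta)$ (the unit-distance bound in $\R^m$) and an upper bound on the number of $\tau$-rich distances. For $m=3$ the unit-distance bound gives $\max_\delta f(\delta)\lesssim N^{3/2}$, and one would hope that a polynomial-partitioning or ruled-surface argument in the spirit of \cite{GK}, applied either to the family of spheres or to the isometry variety, simultaneously caps the count of rich distances so that the dyadic sum optimizes to $N^{10/3}$; a final step would verify that equality is forced only for grid-like configurations.

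The decisive obstruction is the concentration of a single distance, and here the plan runs into genuine trouble. Already the trivial input $\max_\delta f(\delta)\lesssim N^{3/2}$ combined with $\sum_\delta f(\delta)=N^2-N$ only yields $E_2(P)\lesssim N^{7/2}$, strictly weaker than the target $N^{10/3}$; closing this gap is at least as hard as the sharp distinct-distances problem in $\R^3$, since by Cauchy--Schwarz $E_2(P)\lesssim N^{10/3}$ would force $d(P)\gtrsim N^{2/3}$, a bound still open in dimension three. More seriously, for $m\ge4$ the stated conjecture cannot hold: placing $N/2$ points on each of two mutually orthogonal circles of radius $1/\sqrt2$ inside a common $\R^4\subset\R^m$ makes every cross pair lie at distance exactly $1$, so a single distance carries $\asymp N^2$ pairs and $E_2(P)\gtrsim N^4\gg N^{4-2/m}$. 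Thus the real content is confined to $m=3$, where the heart of the matter is exactly to preclude many simultaneously rich distances---the very mechanism that the orthogonal-sphere construction exploits in higher dimensions---and this is where an incidence estimate genuinely stronger than the unit-distance bound would have to be injected.
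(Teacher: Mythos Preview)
The paper states this as an open conjecture and supplies no proof, so there is nothing on the paper's side to compare your argument against. Your proposal is likewise not a proof: after laying out a dyadic decomposition and an Elekes--Sharir-type reduction you correctly observe that the plan collapses, and in fact you go further than the paper by \emph{refuting} the conjecture for $m\ge4$. The Lenz construction you describe is decisive: placing $N/2$ points on each of two orthogonal circles of radius $1/\sqrt2$ inside $\R^4\subset\R^m$ produces $\asymp N^2$ ordered pairs at the single distance $1$, whence $E_2(P)\gtrsim N^4\gg N^{4-2/m}$. So the conjecture as written is simply false for every $m\ge4$; the authors appear not to have noticed this obstruction.

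For $m=3$ your assessment is also accurate. By Lemma~\ref{lem-Holder} with $k=2$, the bound $E_2(P)\lesssim N^{10/3}$ would immediately yield $d(P)\gtrsim N^{2/3}$, the sharp Erd\H{o}s distinct-distances conjecture in $\R^3$, which remains open; and the trivial combination of the unit-distance bound $\max_\delta f(\delta)\lesssim N^{3/2}$ with $\sum_\delta f(\delta)\le N^2$ indeed gives only $E_2(P)\lesssim N^{7/2}$. The gap you isolate---the need for a rich-distance estimate in $\R^3$ genuinely stronger than the single-distance bound---is exactly the obstruction, and no current incidence machinery is known to close it. In summary, your proposal does not prove the statement, but that is because the statement is either false ($m\ge4$) or at least as hard as a major open problem ($m=3$); your analysis is a correct diagnosis rather than a failed proof.
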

A proof of the above estimate for distance energy suffices to solve the Erd\H{o}s conjecture in higher dimensions. However, similar as in $\R^2$, we believe that estimate by higher distance energies would not be optimal.

\section{The distance energy for general lattices and Epstein zeta functions}
In this section, we consider general lattices in $\R^2$ and compare their distance energy. Due to technical reasons, we only deal with the pointwise distance energy, i.e. $E_{L,k}(N):=|\{(p_1,\dots,p_k)\in L^k, \|p_1\|^2=\cdots=\|p_k\|^2\leq N\}|$ for any lattice $L\subset\R^2, k\in\ZZ_{\geq 0}$. Let $r_L(n)=|\{p\in L, \|p\|^2=n\}|$. Then 
\begin{equation}
E_{L,k}(N)=\sum_{n\leq N}r_L(n)^k.
\end{equation}
We have already seen the estimates of $E_{L,k}(N)$ for the square lattice $L$ in the last section. Note that $E_{L,0}(N)$ counts the distinct distances.

A general lattice $L\subset\R^2$ of covolume 1, after rotation, may be written as $\ZZ (a,0)\oplus\ZZ(b,\frac{1}{a})$ for some $a,b>0$. To estimate its distance energy, we need to study the value distribution of the quadratic form $Q_L(x,y)=(ax+by)^2+\frac{1}{a^2}y^2=a^2\left(x^2+2\frac{b}{a}xy+\left(\frac{1}{a^4}+\frac{b^2}{a^2}\right)y^2\right)$. For example, $a=\sqrt{\frac{2}{\sqrt{3}}},b=\frac{1}{2}\sqrt{\frac{2}{\sqrt{3}}}$ correspond to the hexagonal lattice (which we will always denote by $\Sigma$) and the quadratic form $\frac{2}{\sqrt{3}}(x^2+xy+y^2)$. If $\frac{a}{b}$ or $\frac{1}{a^4}+\frac{b^2}{a^2}$ is irrational, then integer solutions to $Q_L(x,y)=Q_L(x',y')$ would be very few, i.e. have small distance energy. 

We will only concern about the lattices with $Q_L$ similar to norms of imaginary quadratic number fields, i.e. \textit{arithmetic} lattices, due to the following K\"{u}hnlein's criterion:
\begin{lemma}\label{lem-Kuhnlein}[K\"{u}hnlein \cite{Kuhn}]
	Let $L\subset\R^2$ be a lattice. Then $L$ is arithmetic if and only if there are $\geq 3$ pairwise linearly independent vectors in $L$ which have the same length.	
\end{lemma}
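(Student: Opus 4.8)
The plan is to prove the two implications separately, treating ``three equal-length vectors $\Rightarrow$ arithmetic'' as the heart of the matter and the converse as a classical fact about representation numbers.

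For that central direction, suppose $v,w,u\in L$ are pairwise linearly independent with $\|v\|=\|w\|=\|u\|=\ell$. Since $v,w$ are $\R$-linearly independent they form a basis of $\R^2$, and the sublattice $L'=\ZZ v\oplus\ZZ w$ has finite index $N:=[L:L']$ in $L$. Hence the order of $u+L'$ in $L/L'$ divides $N$, so $Nu\in L'$; writing $u=\alpha v+\beta w$ this forces $\alpha,\beta\in\ZQ$, and $\alpha,\beta\neq 0$, since otherwise $u$ would be proportional to $v$ or $w$. I would then feed in the length hypothesis: in the basis $\{v,w\}$ the Gram matrix of $L'$ is $\begin{pmatrix}\ell^2 & c\\ c & \ell^2\end{pmatrix}$ with $c=\langle v,w\rangle$, and expanding $\|u\|^2=\ell^2$ yields $(\alpha^2+\beta^2)\ell^2+2\alpha\beta c=\ell^2$, so that $c/\ell^2=(1-\alpha^2-\beta^2)/(2\alpha\beta)\in\ZQ$. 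Thus the Gram matrix of $L'$ is $\ell^2$ times a rational matrix. Since a basis of $L'$ is obtained from any basis of $L$ by an integral matrix $M$ with $\det M\neq 0$, transporting back through the rational matrix $M^{-1}$ shows the Gram matrix of $L$ is likewise $\ell^2$ times a rational matrix, i.e. $Q_L$ is similar to an integral positive definite binary form, so $L$ is arithmetic.

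For the converse, an arithmetic lattice has $Q_L$ proportional to a primitive integral positive definite binary form $f$, which is a norm form attached to an ideal class of an imaginary quadratic order. I would then choose $n$ to be a product of enough rational primes splitting in that order to make the representation number $r_f(n)$ as large as desired; quotienting by the finite group of automorphs of $f$ leaves arbitrarily many, hence at least three, pairwise linearly independent vectors of $L$ of the common length $\sqrt{\lambda n}$, where $\lambda$ is the similarity factor.

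The main obstacle is precisely this converse. The rationality extraction in the first direction is completely elementary, but exhibiting three pairwise independent equal-length vectors requires the unboundedness of representation numbers of integral binary forms, which in turn rests on the form--ideal correspondence for quadratic orders. If one wishes to avoid that theory, it should suffice to track representations at a single split prime power of growing exponent and to check by hand that at most two of the resulting vectors can be mutually proportional, which still delivers the required three.
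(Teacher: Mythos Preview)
The paper does not prove this lemma at all: it is quoted verbatim from K\"uhnlein~\cite{Kuhn} and used as a black box, so there is no in-paper argument to compare against. Your proposal is a correct, self-contained proof of the criterion. The direction ``three equal-length, pairwise independent vectors $\Rightarrow$ arithmetic'' is exactly right and entirely elementary: rationality of $\alpha,\beta$ plus the single equation $\|u\|^2=\ell^2$ forces $\langle v,w\rangle/\ell^2\in\ZQ$, and conjugating by the rational matrix $M^{-1}$ carries rationality of the Gram matrix from $L'$ back to $L$.

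For the converse you lean on the form--ideal correspondence and the unboundedness of $r_f(n)$, which is fine, though one should be a little careful that products of split primes make $r_f(n)$ large for the \emph{specific} form $f$ and not merely for its genus; your closing remark about tracking a single split prime power (after raising to the class-number exponent so the ideal becomes principal) is the cleanest way to close that gap. A shorter route, using only what is already in the paper, is to invoke M\"uller's estimate (Lemma~\ref{lem-Muller}) $\sum_{n\le x}r_Q^2(n)\sim A_Q\,x\log x$ together with the Gauss bound $\sum_{n\le x}r_Q(n)\sim Cx$: these force $r_Q(n)$ to be unbounded, and since two representations of the same $n$ are proportional only if they differ by sign, any $n$ with $r_Q(n)\ge 6$ already yields three pairwise independent vectors of length $\sqrt{\lambda n}$.
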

This immediately implies
\begin{corollary}\label{cor-arithmetic}
Any distance in a non-arithmetic lattice is at most repeated 4 times. Hence the number of distinct distances in a non-arithmetic lattices of $N$ points is $\gtrsim N$ and its distance energy is $O(N^2)$.	
\end{corollary}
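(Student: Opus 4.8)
The plan is to distill Lemma~\ref{lem-Kuhnlein} into a single pointwise statement, namely that in a non-arithmetic lattice $L$ every value is attained at most four times by the norm form, $r_L(n)\le 4$ for all $n\ge 1$, and then to read off both assertions as elementary counting consequences. To obtain this bound I would fix a length $\ell=\sqrt n$ and study the finite symmetric set $S_\ell=\{v\in L:\|v\|=\ell\}$. The decisive geometric observation is that each line through the origin meets $S_\ell$ in at most two points: the lattice points on such a line form a rank-one sublattice $\ZZ w_0$, and $\|kw_0\|=\ell$ has at most one positive integer solution, contributing only the antipodal pair $\pm kw_0$. Hence $|S_\ell|$ equals exactly twice the number of distinct lines it meets, and selecting one vector from each such line yields a maximal family of pairwise linearly independent vectors of length $\ell$ (two vectors being dependent precisely when they lie on a common line through the origin). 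By the contrapositive of Lemma~\ref{lem-Kuhnlein}, non-arithmeticity forces this family to have size at most $2$, so $S_\ell$ meets at most two lines and $r_L(n)=|S_\ell|\le 4$.

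With $r_L(n)\le 4$ in hand, the distinct-distances bound follows from a difference-set count. Given any $N$ points $P\subset L$, the difference set $P-P\subset L$ contains the $N$ distinct vectors $p-p_0$ for a fixed $p_0\in P$, so $|P-P|\ge N$ and its nonzero part has at least $N-1$ elements. Since every nonzero length is realised by at most four vectors of $L$, and a fortiori by at most four vectors of $P-P$, the number of distinct lengths occurring in $P-P$ is at least $(N-1)/4$. These lengths are exactly the distances realised by $P$, so $d(P)\ge (N-1)/4\gtrsim N$, as claimed.

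The energy bound is likewise immediate from the multiplicity estimate. For the pointwise energy of this section, $r_L(n)\le 4$ gives $E_{L,2}(N)=\sum_{n\le N}r_L(n)^2\le 16\,N=O(N)$, which in particular is $O(N^2)$ with room to spare; the contrast with the arithmetic case, where $\sum_{n\le N}r_L(n)^2\sim cN\log N$, makes the improvement visible. For the configurational energy $E_2(P)$ of $N$ points I would write $m_n=\sum_{\|v\|^2=n}c(v)$ with $c(v)=|P\cap(P+v)|$, a sum of at most four terms, so that Cauchy--Schwarz over these terms yields $m_n^2\le 4\sum_{\|v\|^2=n}c(v)^2$ and, after summing in $n$, collapses the distance energy into four times the additive energy $\sum_v c(v)^2$ of $P$.

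I expect the only genuine obstacle to be the first step: correctly converting Lemma~\ref{lem-Kuhnlein}, which counts \emph{pairwise linearly independent} vectors, into the clean pointwise bound $r_L(n)\le 4$. The crux is the line-by-line analysis certifying that antipodal pairs, and only antipodal pairs, are the linearly dependent collisions, so that ``at most two independent directions'' is exactly equivalent to ``at most four vectors.'' Once this is secured, the remaining two assertions are routine bookkeeping with difference sets and the Cauchy--Schwarz inequality.
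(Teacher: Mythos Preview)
Your argument is correct and is exactly the kind of one-line deduction the paper has in mind: the text offers no proof beyond ``This immediately implies,'' and your line-by-line reading of K\"uhnlein's criterion (antipodal pairs on each line, at most two independent directions, hence $r_L(n)\le 4$) together with the difference-set pigeonhole is the intended content.

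Two small remarks. First, your inequality $E_{L,2}(N)=\sum_{n\le N}r_L(n)^2\le 16N$ silently treats the sum as over integers $n$, but for a non-arithmetic lattice the values $\|p\|^2$ need not be integers; the clean fix is $\sum_{n\le N}r_L(n)^2\le 4\sum_{n\le N}r_L(n)=4\,|\{p\in L:\|p\|^2\le N\}|=O(N)$, which is what you want anyway. Second, your final paragraph on the configurational energy $E_2(P)$ is both unnecessary and incomplete: the section is explicitly about the pointwise energy $E_{L,k}(N)$, so the ``distance energy'' in the corollary is $E_{L,2}(N)$, already handled above. Moreover, your Cauchy--Schwarz reduction to the additive energy $\sum_v c(v)^2$ does not yield $O(N^2)$ in general, since an $N$-point subset of any lattice can have additive energy as large as $cN^3$ (take $P$ an arithmetic progression along a single lattice direction); that route only gives $E_2(P)=O(N^3)$. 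You can simply drop that paragraph.
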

\begin{remark}
	For $L$ arithmetic, there is always $E_{L,0}(N)\lesssim\frac{N}{\sqrt{\log N}}$ like the square grids, see Moree and Osburn \cite{MO} for details. Moreover, they proved that in $\R^2$ the hexagonal lattice attains the minimal number of distinct distances, i.e. the minimum of $E_{L,0}(N)$ for $N$ large. 
\end{remark}

Now that arithmetic lattices are nothing but submodules of rings of integers of imaginary quadratic fields, it suffices to consider the norms of those rings. For any negative square-free integer $D$, if $D\equiv1\mod 4$, the ring of integers is $\mathcal{O}_D=\ZZ\left[\frac{1+\sqrt{D}}{2}\right]$ with \textit{discriminant} $D$; otherwise, $\mathcal{O}_D=\ZZ[\sqrt{D}]$ with discriminant $4D$. Hence we define
\begin{equation}\label{eq-Q_D}
 Q_{D}(x,y)=\begin{cases}
 (x+\frac{1+\sqrt{D}}{2}y)(x+\frac{1-\sqrt{D}}{2}y)=x^2+xy+\frac{1-D}{4}y^2,\ D\equiv1\mod 4;\\
 (x+\sqrt{D}y)(x-\sqrt{D}y)=x^2-Dy^2,\ \text{otherwise}.
\end{cases}
\end{equation}
Note that the discriminant is just that of $Q_{D}$ and $Q_D$ are all positive definite. For example, if $D=-1$, it is the Gaussian ring $\ZZ[i]$ with norm $Q_{-1}(x,y)=x^2+y^2$; if $D=-3$, it is the Eisenstein ring $\ZZ\left[\frac{1+\sqrt{3}i}{2}\right]$ with norm $Q_{-3}(x,y)=x^2+xy+y^2$. To have covolume 1, the lattices need to be scaled by $S_D:=\sqrt{2}(-D)^{-\frac{1}{4}}$ or $(-D)^{-\frac{1}{4}}$, which was already seen in the case of hexagonal lattice. This is to assure the justness that there are always $\sim \pi N$ lattice points in a disc of radius $\sqrt{N}$. 

For arithmetic lattices, we may write $r_D(n)$ for $r_L(n)$. Note that $r_{-1}(n)=r(n)$ and $r_{-3}(n)=|\{(x,y)\in\ZZ^2\mid x^2+xy+y^2=n\}|$ are the only two cases of counting integral points on circles, while the others on ellipses. Then we also write \begin{equation}E_{D,k}(N)=\sum_{n\leq N/S_D^2}r_{D}^k(n).\end{equation}
Then we can use Lemma \ref{lem-Muller} to give the asymptotics of $E_{D,2}(N)$. By calculation we see that
\begin{align}
E_{-3,2}(N)&=3\sqrt{3}N\log N+O(N),
\end{align}
which is larger than $E_{-1,2}(N)=4N\log N+O(N)$ as we have seen from (\ref{eq-k=2}). Note that the $2\times 2$ matrices as of Lemma \ref{lem-Muller} are 
\[\begin{cases}
\begin{pmatrix}
2&1\\
1\frac{1-D}{2}
\end{pmatrix}, \text{ if }D\equiv 1\mod{4},\\
{}\\
\begin{pmatrix}
2&0\\
0&-2D
\end{pmatrix}, \text{ otherwise}.
\end{cases}\]
By more careful calculation on coefficients of the main terms, we see the following
\begin{theorem}\label{thm-energy for lattices}
	Let $D$ be any square free negative integer and $N$ large. Then $E_{D,2}(N)<E_{-3,2}(N)$ for $D\equiv 1\mod{4}$, and $E_{D,2}(N)<E_{-1,2}(N)$ otherwise. In all, the pointwise distance energy $E_{L,k}(N)$ attains the maximum only when $L$ is the hexagonal lattice in $\R^2$.
\end{theorem}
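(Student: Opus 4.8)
\emph{The plan} is to reduce the entire statement to a comparison of the leading coefficients in the $N\log N$-asymptotics supplied by M\"{u}ller's theorem. Applying Lemma~\ref{lem-Muller} to the primitive positive definite binary form $Q_D$ (primitive since its $x^2$-coefficient is $1$) at $x=N/S_D^2$ and using $\log(N/S_D^2)\sim\log N$ gives
\[
E_{D,2}(N)=\sum_{n\le N/S_D^2}r_D^2(n)\sim A_{Q_D}\,\frac{N}{S_D^2}\,\log\frac{N}{S_D^2}\sim C_D\,N\log N,\qquad C_D:=\frac{A_{Q_D}}{S_D^2}.
\]
Thus for $N$ large the ordering of the energies is governed entirely by the real numbers $C_D$, and the theorem becomes the arithmetic assertion that $C_D$ is maximal exactly at $D=-3$.

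\emph{Computing $C_D$.} Here $\det Q$ equals $|D|$ for $D\equiv1\pmod{4}$ and $4|D|$ otherwise, while $S_D^2$ equals $2|D|^{-1/2}$ and $|D|^{-1/2}$ respectively, as recorded before the theorem. Since $|D|$ is squarefree and $A$ is multiplicative with $A(p)=2$ for odd $p$ and $A(2^2)=A(2^3)=2$, the quantity $A_{Q_D}=12\,\frac{A(\det Q)}{\det Q}\prod_{p\mid\det Q}(1+1/p)^{-1}$ factors as a constant (from the scaling $S_D$ and the prime $2$) times a product over the odd prime divisors of $|D|$. Writing $g(m):=\frac1{\sqrt{m}}\prod_{p\mid m}\frac{2p}{p+1}=\prod_{p\mid m}\frac{2\sqrt{p}}{p+1}$, a direct calculation gives $C_D=6\,g(|D|)$, $4\,g(|D|)$, or $2\sqrt{2}\,g(|D|/2)$ according as $D\equiv1,3,2\pmod{4}$; in particular $C_{-3}=6g(3)=3\sqrt{3}$, $C_{-1}=4g(1)=4$, and $C_{-2}=2\sqrt{2}$, matching the values quoted before the theorem.

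\emph{Maximizing.} By AM-GM, $p+1\ge2\sqrt{p}$, so each factor $\frac{2\sqrt{p}}{p+1}\le1$ with strict inequality for $p>1$; hence $g$ strictly decreases whenever a prime factor is adjoined, and within any prescribed set of admissible prime divisors it is largest at the smallest admissible integer. The constraint $D\equiv1\pmod{4}$ forces $|D|$ odd with $|D|\equiv3\pmod{4}$, so $|D|\ge3$ and $C_D=6g(|D|)\le6g(3)=3\sqrt{3}=C_{-3}$, with equality only at $D=-3$; this is the first inequality of the theorem. For $D\not\equiv1\pmod{4}$ with $D\ne-1$, either $D\equiv3$ and $|D|\ge5$ so $C_D=4g(|D|)<4$, or $D\equiv2$ and $C_D\le2\sqrt{2}<4$; in both cases $C_D<4=C_{-1}$, the second inequality. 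Finally $C_{-3}=3\sqrt{3}>4=C_{-1}$ dominates every other $C_D$, so the hexagonal lattice is the unique maximizer, proving the $k=2$ case.

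\emph{General $k$.} The same scheme applies once the analytic input is upgraded: one needs the analogue of Wilson's factorization (Lemma~\ref{lem-Wilson}) for $\sum_n r_D(n)^k n^{-s}$, built from $\zeta(s)$ and the Dirichlet $L$-function of the quadratic character attached to $D$, so that Proposition~\ref{prop-sq sum est.} yields $\sum_{n\le x}r_D(n)^k\sim x\,P^{(D)}_{2^{k-1}-1}(\log x)$ with the \emph{same} degree $2^{k-1}-1$ for every $D$. The log-degree being independent of $D$, the comparison again reduces, after division by $S_D^2$, to a single leading constant, which the hexagonal lattice should again maximize for the same local reason: its $6$-fold symmetry produces the largest local representation densities at every prime, the very feature that simultaneously minimizes the distinct-distance count $E_{L,0}$ and maximizes all higher moments. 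I expect the crux to lie exactly here: establishing the Euler-product factorization of $\sum_n r_D(n)^k n^{-s}$ uniformly in $D$---delicate when $D$ has class number $>1$, since then $r_D$ counts representations by the principal form alone rather than all ideals of given norm---and then verifying that the renormalized top coefficient is genuinely largest at $D=-3$. For $k=2$ this is precisely M\"{u}ller's theorem, but for $k\ge3$ it demands a fresh computation of the relevant singular series.
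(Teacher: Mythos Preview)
Your argument for $k=2$ is correct and is precisely the paper's approach: the paper's entire proof is the single sentence ``By more careful calculation on coefficients of the main terms, we see the following,'' and you have supplied that calculation explicitly, reducing everything to the comparison of $C_D=A_{Q_D}/S_D^2$ via M\"uller's formula and then checking $C_{-3}=3\sqrt3$ dominates. Your ``General $k$'' paragraph is appropriately flagged as a programme rather than a proof; the paper itself does not establish the statement for $k\ge3$ either (the sentence immediately following the theorem reads ``One may also be interested in higher pointwise energies $E_{D,k}(N)$ for $k\geq 3$''), so the $E_{L,k}$ in the last clause should be read as $E_{L,2}$.
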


One may also be interested in higher pointwise energies $E_{D,k}(N)$ for $k\geq 3$. Explicit formulas of $r_D(n)$ may be found in Huard, Kaplan and Williams \cite{HKW} or Sun and Williams \cite{SW}, but estimating $E_{D,k}(N)$ from those formulae is hardly possible.

On the other hand, in general $r_Q(n)=|\{(x,y)\in\ZZ^2\mid Q(x,y)=n\}|$ is used to define the Epstein zeta function:
\begin{equation}\label{eq-Epstein}
Z_{Q}(s)=\sum_{m,n\neq 0}\frac{1}{Q(m,n)^s}=\sum_{n=1}^\infty\frac{r_Q(n)}{n^s}.
\end{equation}
which converges for $\Re{s}>1$. Moreover, it can be analytically continued to the whole complex plane with a simple pole at $s=1$ and satisfies the functional equation ($D=disc(Q)$)
\begin{equation}\label{eq-Epstein functional}
\left(\frac{\sqrt{D}}{2\pi}\right)^s\Gamma(s)Z_Q(s)=\left(\frac{\sqrt{D}}{2\pi}\right)^{1-s}\Gamma(1-s)Z_Q(1-s),
\end{equation}
see for instance Zhang and Williams \cite{ZW}. There is also a closed formula by Chowla and Selberg (see \cite{CS}), which states for $Q(x,y)=ax^2+bxy+cy^2, D=b^2-4ac$
\begin{align}\label{eq-Selberg Chowla formula}
Z_Q(s)&=a^{-s}\zeta(2s)+a^{-s}\sqrt{\pi}\frac{\Gamma(s-\frac{1}{2})}{\Gamma(s)}\zeta(2s-1)l^{1-2s}+R_Q(s),\\
R_Q(s)&=\frac{4a^{-s}l^{-s+\frac{1}{2}}}{\pi^{-s}\Gamma(s)}\sum_{n=1}^\infty n^{s-\frac{1}{2}}(\sum_{d\mid n}d^{1-2s})K_{s-\frac{1}{2}}(2\pi nl)\cos(\frac{n\pi b}{a}),\notag
\end{align} 
where $K_\nu(z)$ is a modified Bessel function, $l=\frac{\sqrt{|D|}}{2a}$.

To investigate distribution of the higher distance energies $E_{D,k}(N)$, we initiate the study of higher moments of the Epstein zeta functions, i.e.
\begin{equation}\label{eq-Epstein higher moments}
Z_{Q,k}(s):=\sum_{n=1}^\infty\frac{r_Q(n)^k}{n^s},\ k\geq 3.
\end{equation}

\textit{Question}: Do these higher moments satisfy any functional equation or have closed formulae as of (\ref{eq-Epstein functional}) or (\ref{eq-Selberg Chowla formula})?\\

If they do, then we should be able to derive asymptotics for the average of $r_D^k(n)$ by Perron's formula as of Lemma \ref{lem-Perron}. It has been shown that $Z_{Q}(s)$ attains the minimum only for equivalent forms of $Q_{-3}$, i.e. for the hexagonal lattice, whenever $s\geq 0$, see Cassels \cite{Ca}. Thus, we wonder if this is true for all the higher moments and suggest the following

\begin{conjecture}
	For all $k\geq 1$, $Z_{Q,k}(s)\geq Z_{Q_{-3},k}(s), \forall s>1$. After analytic continuation (if there is), this should be true for all $s\geq0$.
\end{conjecture}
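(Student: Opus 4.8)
The plan is to pass from the Dirichlet series to a heat-kernel (theta) formulation and to try to prove a pointwise-in-$t$ inequality. Writing $F_{Q,k}(t) := \sum_{n\ge 1} r_Q(n)^k e^{-nt}$, one has the Mellin representation $\Gamma(s) Z_{Q,k}(s) = \int_0^\infty F_{Q,k}(t)\, t^{s-1}\, dt$ for $\Re s > 1$. Detecting the equal-norm constraint $\|v_1\|^2 = \cdots = \|v_k\|^2$ by a $(k-1)$-dimensional torus integral, one checks that
\[
F_{Q,k}(t) = \int_{T^{k-1}} \prod_{i=1}^{k} \theta_Q(x_i + i\tau)\, dx, \qquad \tau = \frac{t}{2\pi k},
\]
where $\theta_Q(z) = \sum_{v \in L} e^{2\pi i z \|v\|^2}$ is the theta series of $L$ and $T^{k-1} = \{x \in (\R/\ZZ)^k : x_1 + \cdots + x_k = 0\}$. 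Since the Mellin kernel $t^{s-1}$ is positive for $s > 0$, it would suffice to prove the pointwise statement that, for every fixed $t > 0$, the hexagonal lattice minimizes $F_{Q,k}(t)$ among lattices of the given covolume. Granting this, $Z_{Q,k}(s) \ge Z_{Q_{-3},k}(s)$ for all $s > 0$ would follow at once, and the continuation to $s \le 0$ would be addressed through whatever functional equation the higher moments $Z_{Q,k}$ turn out to satisfy.

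First I would dispose of $k = 1$, where the plan goes through completely. Here $F_{Q,1}(t) = \theta_Q(it/2\pi) - 1 = \sum_{v \ne 0} e^{-t\|v\|^2}$ is the classical lattice theta function, and Montgomery's theorem — that at fixed covolume the hexagonal lattice minimizes $\sum_v e^{-t\|v\|^2}$ for every $t > 0$ — gives exactly the required pointwise domination. Mellin transforming recovers Cassels' inequality; since the simple pole of $Z_Q$ at $s=1$ has the same covolume-determined residue for every lattice, the inequality survives the subtraction needed to continue below $s=1$ and holds for all $s > 0$, and the Epstein functional equation \eqref{eq-Epstein functional} transfers it to $s \le 0$. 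For $k \ge 2$ the only regime I can presently control is $t \to \infty$: there $F_{Q,k}(t) \sim r_Q(\lambda_1(Q))^k\, e^{-\lambda_1(Q)\,t}$, governed by the minimal squared length $\lambda_1(Q)$, and the hexagonal lattice is the unique covolume-normalized lattice maximizing $\lambda_1$ (its six shortest vectors sit at the largest admissible radius). Hence the hexagonal tail decays fastest and $F_{Q_{-3},k}(t) \le F_{Q,k}(t)$ for all sufficiently large $t$, which yields the conjectured inequality $Z_{Q,k}(s) \ge Z_{Q_{-3},k}(s)$ once $s$ exceeds a threshold $s_0(k)$.

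The hard part — and I suspect the genuine obstacle — is that for $k \ge 2$ the pointwise domination is \emph{false} in the opposite regime $t \to 0^+$. There $F_{Q,k}(t)$ is governed by the mean value $\sum_{n \le X} r_Q(n)^k$, whose leading coefficient — given for $k=2$ by M\"{u}ller's constant in Lemma~\ref{lem-Muller} — is, by the very Theorem~\ref{thm-energy for lattices} of this paper, \emph{maximized} (not minimized) by the hexagonal lattice. Thus $F_{Q_{-3},k}(t) - F_{Q,k}(t)$ is nonnegative for small $t$ and nonpositive for large $t$, so no lattice dominates pointwise, and for $k \ge 2$ the double pole of $Z_{Q,k}$ at $s = 1$ carries the larger coefficient for the hexagonal lattice, forcing $Z_{Q_{-3},k}(s) > Z_{Q,k}(s)$ for $s$ near $1$ — the reverse of the conjecture. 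The honest target is therefore the inequality for $s > s_0(k)$, and the crux is to locate this crossover and to estimate the torus integral of the product $\prod_i \theta_Q(x_i + i\tau)$ uniformly over the modular curve; the absence, so far, of a functional equation for $Z_{Q,k}$ with $k \ge 2$ is precisely what prevents transferring the favorable large-$t$ control back down toward $s = 1$.
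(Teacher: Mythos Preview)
The statement is posed in the paper as an open \emph{conjecture}; no proof is offered, so there is nothing in the paper to compare your argument against. Your treatment of $k=1$ via Montgomery's theta-minimization theorem and Mellin inversion is the standard route and correctly recovers Cassels' result, with the functional equation handling $s\le 0$.

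For $k\ge 2$ you have not produced a proof but something more useful: a concrete reason to believe the conjecture is \emph{false} as stated. Your pole analysis is correct. The Dirichlet series $Z_{Q,2}(s)$ has a double pole at $s=1$ whose leading coefficient is the constant governing $E_{L,2}(N)\sim c_L\,N\log N$ for the covolume-normalized lattice, and the paper's own Theorem~\ref{thm-energy for lattices} asserts that this constant is \emph{maximized} (not minimized) at the hexagonal lattice. Concretely, the paper's computed values $E_{-3,2}(N)\sim 3\sqrt{3}\,N\log N$ versus $E_{-1,2}(N)\sim 4N\log N$ give $Z_{Q_{-3},2}(s)\sim 3\sqrt{3}/(s-1)^2 > 4/(s-1)^2 \sim Z_{Q_{-1},2}(s)$ as $s\to 1^+$, so already the square lattice violates the conjectured inequality in a right neighborhood of $1$. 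Your large-$s$ argument via the shortest vector is also sound and shows the inequality \emph{does} hold once $s$ exceeds some threshold $s_0(k)$. The honest conclusion is that the conjecture, taken literally for all $s>1$, contradicts the paper's own Theorem~\ref{thm-energy for lattices} when $k\ge 2$; it should be amended either to $s>s_0(k)$ or to the reverse inequality near $s=1$. This is not a gap in your reasoning --- it is a correction to the conjecture.
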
 

\end{document}